\newcommand{\al}{\alpha}
\newcommand{\PF}{{\rm PF}}
\newcommand{\F}{{\rm F}}
\newcommand{\RF}{{\rm RF}}
\newtheorem{theorem}{Theorem}
\newtheorem{lemma}{Lemma}
\newtheorem{proposition}{Proposition}
\newtheorem{corollary}{Corollary}
\newtheorem{example}{Example}
\newtheorem{remark}{Remark}
\begin{document}

\title{Almost Symmetric Numerical Semigroups with Odd Generators}

	\author{Francesco Strazzanti}
	\address{Francesco Strazzanti - Institut de Matem\`{a}tica - Universitat de Barcelona - Gran Via de les Corts Catalanes 585 - 08007 Barcelona - Spain}
	\email{francesco.strazzanti@gmail.com}

\author{Kei-ichi Watanabe}
	\address{Kei-ichi Watanabe - Department of Mathematics - College of Humanities and Sciences - Nihon University - Setagaya-ku - Tokyo 156-8550 - Japan}
	\email{watanabe@math.chs.nihon-u.ac.jp}

\begin{abstract}
We study almost symmetric semigroups generated by odd integers. If the embedding dimension is four, we characterize when a symmetric semigroup that is not complete intersection or a pseudo-symmetric semigroup is generated by odd integers. Moreover, we give a way to construct all the almost symmetric semigroups with embedding dimension four and type three generated by odd elements. In this case we also prove that all the pseudo-Frobenius numbers are multiple of one of them and this gives many consequences on the semigroup and its defining ideal.
\end{abstract}
\keywords{Symmetric semigroups, pseudo-symmetric semigroups, almost symmetric semigroups, pseudo-Frobenius numbers, \RF-matrices.}

\maketitle

\section{Introduction}

Numerical semigroups have been extensively studied in the last decades for several reasons, since they appears in many areas of mathematics like commutative algebra, algebraic geometry, number theory, factorization theory, combinatorics or coding theory. For instance, the connection with commutative algebra has greatly influenced the theory of numerical semigroups and it is not a coincidence that many invariants of numerical semigroups have the same name of well-known invariants in commutative algebra. One of the main results that constructed a bridge between these two areas is the celebrated theorem of Kunz \cite{K} that establishes the equivalence between Gorenstein rings and symmetric numerical semigroups. More precisely, if $R$ is a one-dimensional analytically irreducible and residually rational noetherian local ring, then it is Gorenstein if and only if the associated value-semigroup (that is a numerical semigroup) is symmetric.

An important notion related to the symmetry of a numerical semigroup is given by the pseudo-symmetric property. The rings that correspond to the pseudo-symmetric semigroups are called Kunz rings by many authors and there is an extensive literature about them. See for instance the monograph \cite{BDF} that also provides a dictionary between commutative algebra and numerical semigroup theory.

In 1997 Barucci and Fr\"oberg \cite{BF} introduced the notion of almost symmetric numerical semigroup that generalizes both symmetric and pseudo-symmetric ones. Similarly, in the same paper they introduced almost Gorenstein ring as the correspondent notion in commutative algebra; of course, it generalizes Gorenstein and Kunz rings. The last definition is given in the one-dimensional analytically unramified local case, but recently it was extended in the one-dimensional and higher dimension local case as well as in the graded context, see \cite{GMP,GTT}.

On the other hand almost symmetric semigroups have been studied by many authors from several points of view. They are also one of the main tools used in \cite{OST} to construct one-dimensional Gorenstein local rings with decreasing Hilbert functions in some level, giving an answer to a commutative algebra problem known as Rossi Problem.
There are also many generalizations of the almost symmetric semigroups in literature, see \cite{CGKM,DS,GK,HHS}. 

The purpose of this paper is to study the almost symmetric semigroups generated by odd integers, in particular when the embedding dimension is four. In this case Moscariello \cite{M} proved that the type of the semigroup is at most three confirming a conjecture of T. Numata. This means that we can divide the almost symmetric semigroups with embedding dimension four in three classes: symmetric, pseudo-symmetric and having type three. 

If $S=\langle n_1, \dots, n_e \rangle$ is a numerical semigroup, we say that $k[S]:=k[t^s \mid s \in S]$ is the numerical semigroup ring associated to $S$, where $k$ is a field and $t$ is an indeterminate. It is possible to present this ring as a quotient of a polynomial ring $k[S] \cong k[x_1, \dots, x_e]/I_S$ and $I_S$ is called the defining ideal of $S$. We set $\deg(x_i)=n_i$ for every $i=1, \dots, e$, thus $I_S$ is homogeneous.

Assume now that $S$ has embedding dimension four. In the case of symmetric and pseudo-symmetric numerical semigroups the defining ideal is known by Bresinsky \cite{B} and Komeda \cite{Ko}. The case with type three has been recently studied in \cite{E,HW}, where the defining ideal is found using the notion of $\RF$-matrix, introduced in \cite{M}.

We focus on the case where all the generators of $S$ are odd. In particular, if $S$ is symmetric but not complete intersection we characterize when this happens in terms of some numbers related to the defining ideal of $S$. Moreover, in the pseudo-symmetric case we connect this property to the rows of a suitable \RF-matrix associated to $S$. 
If $S$ is almost symmetric with type three, we prove that the set of the pseudo-Frobenius numbers of $S$ is $\PF(S)=\{f,2f,3f\}$ for some integer $f$. This lead to the description of the generators of both $S$ and $I_S$ as well as the minimal free resolution of $k[x_1, \dots, x_4]/I_S$ in terms of the numbers $\al_i=\min\{\al \mid \al n_i \in \langle n_1, \dots, \widehat{n_i}, \dots, n_4 \rangle\}$ for $i=1, \dots, 4$, where $n_1$, $n_2$, $n_3$ and $n_4$ are the minimal generators of $S$.  This allows us to construct all such semigroups and gives examples of numerical semigroups in which $\PF(S)$ has this particular shape, which have been studied in \cite{GKMT}.

The structure of the paper is the following. In Section \ref{basic concepts} we fix the notation and recall some useful definitions and results. In Section \ref{section symmetric} we characterize when the generators of a symmetric numerical semigroup with embedding dimension four are all odd. In Section \ref{section pseudo-symmetric} we do the same in the pseudo-symmetric case. In the last section we consider the case of almost symmetric semigroups with embedding dimension four and type three. Here we prove Theorem \ref{type 3} which gives the pseudo-Frobenius numbers and that allows to get Corollary \ref{HW}, where the generators of $S$ and $I_S$ as well as the minimal free resolution of $k[x_1, \dots, x_4]/I_S$ are described. Moreover, in Theorem \ref{construction} we give a way to construct all the almost symmetric semigroups with embedding dimension four and type three.

Several computations of the paper are performed by using the GAP system \cite{GAP} and, in particular, the NumericalSgps package \cite{DGM}.

\section{Basic Concepts} \label{basic concepts}

We denote by $\mathbb{N}$ the set of the natural numbers including $0$. A numerical semigroup $S$ is an additive submonoid of $\mathbb{N}$ such that $\mathbb{N} \setminus S$ is finite.
Every numerical semigroup has a finite system of generators, i.e. there exist some positive integers $n_1, n_2, \dots, n_s$ such that $S=\langle n_1, n_2,\dots, n_s \rangle:=\{\sum_{i=1}^s a_i n_i \mid a_i \in \mathbb{N} {\rm \ for \ } i=1, \dots, s\}$. Moreover, there exists a unique minimal system of generators $n_1, \dots, n_e$ of $S$ and the number $e$ is called embedding dimension of $S$. The finiteness of $\mathbb{N}\setminus S$ is equivalent to $\gcd(n_1, \dots, n_e)=1$. 
If $S=\langle n_1, \dots, n_e \rangle$ we denote by $\al_i$ the minimum integer such that $\al_i n_i=\sum_{j \neq i} a_j n_j$ for some non-negative integers $a_1, \dots, a_e$.

The maximum of $\mathbb{Z} \setminus S$ is known as the Frobenius number of $S$ and we denote it by $\F(S)$. We say that an integer $f \in \mathbb{Z} \setminus S$ is a pseudo-Frobenius number of $S$ if $f+s \in S$ for every $s \in S \setminus \{0\}$. We denote the set of the pseudo-Frobenius numbers by $\PF(S)$ and we refer to its cardinality $t(S)$ as the type of $S$. Clearly $\F(S)$ is always a pseudo-Frobenius number, thus $t(S) \geq 1$.

Consider the injective map $\varphi:S \rightarrow \mathbb{Z} \setminus S$ defined by $\varphi(s)=\F(S)-s$. If $\varphi$ is a bijection we say that $S$ is symmetric, whereas if the image of $\varphi$ is equal to $\mathbb{Z} \setminus S$ except for $\F(S)/2$ we say that $S$ is pseudo-symmetric. It is not difficult to see that $S$ is symmetric if and only if it has type $1$ and it is pseudo-symmetric if and only if $\PF(S)=\{\F(S)/2,\F(S)\}$. Moreover, setting $g(S)=|\mathbb{N}\setminus S|$, $S$ is symmetric (resp. pseudo-symmetric) if and only if $2g(S)=\F(S)+1$ (resp. $2g(S)=\F(S)+2$). We say that $S$ is almost symmetric if and only if $2g(S)=\F(S)+t(S)$. There exists a useful characterization of the almost symmetric property due to H. Nari \cite[Theorem 2.4]{N}: if $\PF(S)=\{f_1 < f_2 < \dots < f_t=\F(S)\}$, a numerical semigroup is almost symmetric if and only if $f_i+f_{t-i}=\F(S)$ for every $i=1, \dots, t-1$. 

If $f \in \PF(S)$, then $f+n_i \in S$ for every $i$ and, thus, there exist $\lambda_{i1}, \dots, \lambda_{ie} \in \mathbb{N}$ such that $f+n_i=\sum_{j=1}^e \lambda_{ij}n_j$. Since $f \notin S$, $\lambda_{ii}$ has to be equal to zero. For every $i,j = 1, \dots, e$, set $a_{ii}=-1$ and $a_{ij}=\lambda_{ij}$ if $i \neq j$. Following \cite{M} we say that the matrix $\RF(f)=(a_{ij})$ is a row-factorization matrix of $f$, briefly RF-matrix. Note that there could be several RF-matrices of $f$ and that $f=\sum_{j=1}^e a_{ij}n_j$ for every $i$. 
For instance, consider the numerical semigroup $S=\langle 8,10,11,13 \rangle$ that has embedding dimension four and is symmetric, because $\PF(S)=\{25\}$. The following are both \RF-matrices of $\F(S)=25$:

\begin{equation*}
\begin{pmatrix}
   -1    &    0    &    3    &    0    \\
    3    &   -1    &    1    &    0    \\
    2    &    2    &   -1    &    0    \\
    1    &    3    &    0    &   -1    \\
\end{pmatrix}, \ \ \ \  \ \ \ \ 
\begin{pmatrix}
   -1    &    2    &    0    &    1    \\
    0    &   -1    &    2    &    1    \\
    0    &    1    &   -1    &    2    \\
    2    &    0    &    2    &   -1    \\
\end{pmatrix}.
\end{equation*}

\vspace{10pt}

\section{Symmetric Semigroups} \label{section symmetric}

We start by studying the symmetric numerical semigroups with embedding dimension four. If the semigroup is not complete intersection, there is a theorem proved by Bresinsky \cite{B} that gives much information on the semigroup and its defining ideal. We state it following \cite[Theorem 3]{BFS}.  
By convention, if $i$ is an integer not included between $1$ and $4$, we set $a_i=a_j$ and $b_i=b_j$ with $i \equiv j \mod 4$ and $1 \leq j \leq 4$.

\begin{theorem}
Let $S$ be a numerical semigroup with $4$ minimal generators. Then, $S$ is symmetric and not complete intersection if and only if there are integers $a_i$ and $b_i$ with $i \in \{1, \dots, 4\}$ such that $0 < a_i < \al_{i+1}$ and $0 < b_i < \al_{i+2}$ for all $i$,
\begin{equation}\label{al_i}
\al_1=a_1+b_1, \ \al_2=a_2+b_2, \ \al_3=a_3+b_3, \ \al_4= a_4+b_4
\end{equation}
and
\begin{equation}\label{n_i}
\begin{split}
n_1=\al_2\al_3a_4+a_2b_3b_4, \ \ n_2=\al_3\al_4a_1+a_3b_4b_1,  \\
n_3=\al_1\al_4a_2+a_4b_1b_2, \ \ n_4=\al_1\al_2a_3+a_1b_2b_3.
\end{split}
\end{equation}
In this case $I_S=(f_1,f_2,f_3,f_4,f_5)$, where
\begin{align*}
&f_1=x_{1}^{\al_{1}}-x_{3}^{b_3}x_{4}^{a_4}, &&f_2=x_{2}^{\al_{2}}-x_{1}^{a_1}x_{4}^{b_4}, &&&f_3=x_{3}^{\al_{3}}-x_{1}^{b_1}x_{2}^{a_2}, \\
&f_4=x_{4}^{\al_{4}}-x_{2}^{b_2}x_{3}^{a_3}, &&f_5=x_{1}^{a_1}x_{3}^{a_3}-x_{2}^{a_2}x_{4}^{a_4}.  
\end{align*}
\end{theorem}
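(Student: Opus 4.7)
The plan is to exploit the fact that $S$ being symmetric is equivalent to the numerical semigroup ring $k[S]$ being Gorenstein, so that $I_S$ is a Gorenstein ideal of height $3$ in $k[x_1,\ldots,x_4]$. By the Buchsbaum-Eisenbud structure theorem, such an ideal is generated by the sub-Pfaffians of a skew-symmetric matrix of odd size $2m+1$. The complete intersection case corresponds to $2m+1=3$, so the very next possibility is $2m+1=5$, forcing exactly $5$ minimal generators. The displayed $f_1,\ldots,f_5$ represent precisely this minimal non-complete-intersection case.

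For the forward direction, I would work with an arbitrary minimal generating set of the toric ideal $I_S$, noting that every minimal generator is a pure-difference binomial $x^u-x^v$. For each $i \in \{1,\ldots,4\}$ there is a minimal binomial of the form $x_i^{\al_i}-m_i$, where $m_i$ is a monomial in the remaining variables. The first step is a case analysis on the support of each $m_i$: supports of size $1$ force a gluing that degenerates $I_S$ to a complete intersection, contradicting the hypothesis, while supports of size $3$ are incompatible with the skew Pfaffian structure for exactly $5$ Gorenstein generators. Hence each $m_i$ has support of size exactly $2$. The next step is to show that, after a suitable relabelling of the variables, the four supports form a $4$-cycle, namely $\{3,4\},\{1,4\},\{1,2\},\{2,3\}$. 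Once this cyclic pattern is in place, each exponent appears as one of two distinct contributions to $\al_i$, and these are named $a_i,b_i$; the identities $\al_i=a_i+b_i$ follow by matching occurrences across the cycle. The fifth generator $f_5$ then emerges as the unique extra Pfaffian of the $5\times 5$ skew matrix whose non-zero entries are the monomials $x_i^{a_i}$ and $x_i^{b_i}$. Finally, reading off the degree identities $\al_i n_i = b_{i+2}n_{i+2}+a_{i+3}n_{i+3}$ (indices mod $4$) gives a linear system whose Cramer-rule solution yields the closed-form expressions (\ref{n_i}).

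For the reverse direction, I would start from the $a_i,b_i$ satisfying the stated inequalities, define the $\al_i$ by (\ref{al_i}) and the $n_i$ by (\ref{n_i}), and verify the basic combinatorial facts: the $n_i$ are positive integers with $\gcd(n_1,\ldots,n_4)=1$, and the five binomials $f_1,\ldots,f_5$ lie in $I_S$. To conclude that these five binomials \emph{generate} $I_S$ and that $S$ is symmetric, I would exhibit an explicit $5\times 5$ skew-symmetric matrix whose $4\times 4$ Pfaffians are $f_1,\ldots,f_5$; the Buchsbaum-Eisenbud theorem then yields both the Gorenstein property and a free resolution confirming that $k[x_1,\ldots,x_4]/(f_1,\ldots,f_5)\cong k[S]$.

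The main obstacle lies in the forward direction, specifically in forcing the supports of the $m_i$ to be of size $2$ and to arrange themselves into the precise $4$-cycle pattern. This is a combinatorial rigidity statement, and it is the step where the non-complete-intersection hypothesis enters in an essential way: without it one cannot exclude size-$1$ configurations that collapse the ideal to a complete intersection.
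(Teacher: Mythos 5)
You should first note that the paper does not prove this statement at all: it is Bresinsky's theorem, imported verbatim from \cite{B} in the formulation of \cite[Theorem 3]{BFS}. So the benchmark is Bresinsky's original argument, which is a purely combinatorial analysis of the relations of a $4$-generated symmetric semigroup (and predates the Buchsbaum--Eisenbud structure theorem); your Pfaffian strategy is the standard modern gloss, and it is indeed how \cite{BFS} obtain the resolution. As a proof of the equivalence, however, your outline has two genuine gaps, both in the forward direction. First, Buchsbaum--Eisenbud gives only that the minimal number of generators of the height-$3$ Gorenstein ideal $I_S$ is \emph{odd}; excluding $3$ (not complete intersection) leaves $7, 9, \dots$ entirely open. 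The bound $\mu(I_S)\leq 5$ for $4$-generated \emph{symmetric} semigroups is precisely the hard content of Bresinsky's theorem --- recall that for non-symmetric $4$-generated semigroups $\mu(I_S)$ is unbounded, by Bresinsky himself --- so your step ``forcing exactly $5$ minimal generators'' assumes what must be proved. Second, the rigidity step (each $m_i$ has support of size exactly $2$, and the four supports arrange into the $4$-cycle $\{3,4\},\{1,4\},\{1,2\},\{2,3\}$) is asserted rather than proved, and you acknowledge this yourself. Worse, your stated reason for excluding size-$3$ supports (``incompatible with the skew Pfaffian structure for exactly $5$ Gorenstein generators'') is circular, since it presupposes the unestablished $\mu=5$; and the claim that a size-$1$ support, i.e.\ a relation $\al_i n_i = c\, n_j$, forces a complete intersection via gluing also needs a real argument --- a single binomial in two variables does not by itself exhibit $S$ as a gluing of two complete intersections without analyzing the remaining relations.

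Two smaller points. In the derivation of (\ref{n_i}) from the degree identities $\al_i n_i = b_{i+2}n_{i+2}+a_{i+3}n_{i+3}$, Cramer's rule (equivalently, the adjugate of the relation matrix) determines $(n_1,\dots,n_4)$ only up to a rational scalar; to land exactly on the displayed formulas you must argue that the proportionality constant is $1$, e.g.\ using $\gcd(n_1,\dots,n_4)=1$ together with a computation of the gcd of the formula values, which is not automatic. The reverse direction is sound in outline: exhibiting the $5\times 5$ skew-symmetric matrix, checking grade $3$, and then comparing the Pfaffian ideal with the prime $I_S$ (say by a colength or Hilbert-function comparison, as this paper does in the proof of Theorem \ref{construction}) does yield both the equality of ideals and the Gorenstein, hence symmetric, property --- but note that Buchsbaum--Eisenbud alone gives Gorensteinness of the Pfaffian ideal, not that it equals $I_S$, so that comparison step cannot be skipped.
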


In this section we denote by $a_i$ and $b_i$ the integers that appear in the previous theorem.

\begin{theorem} \label{symmetric}
Let $S$ be a symmetric numerical semigroup minimally generated by $n_1, \ldots, n_4$ and assume that $S$ is not complete intersection. The following conditions are equivalent:
\begin{enumerate}[label=\arabic*)]
\item Every $n_i$ is odd.
\item One of the following holds:
\begin{enumerate}
	\item All the $\al_i$'s and the $a_i$'s are odd;
	\item There is exactly one index $i_0$ for which $\al_{i_0}$ is even. Moreover, $a_{i_0}$ and $a_{i_0-1}$ are odd, while the other $a_i$'s are even;
	\item All the $\al_i$'s are even and all the $a_i$'s are odd.
\end{enumerate}	
\end{enumerate}
\end{theorem}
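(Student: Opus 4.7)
The strategy is to reduce everything to a finite parity check. From \eqref{al_i} one has $b_i \equiv \alpha_i + a_i \pmod 2$, so substituting into \eqref{n_i} makes the parity of $n_j = \alpha_{j+1}\alpha_{j+2}a_{j+3} + a_{j+1} b_{j+2}b_{j+3}$ a Boolean polynomial in the parities $\epsilon_i := \alpha_i \bmod 2$ and $\sigma_i := a_i \bmod 2$. The statement to prove becomes: among the $2^8$ parity vectors, the ones for which $n_1,n_2,n_3,n_4$ are all odd are exactly those described in (a), (b) and (c).

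The plan is to split the analysis by $k := |\{i : \alpha_i \text{ is even}\}|$ and use the cyclic $\mathbb{Z}/4$-symmetry of \eqref{n_i} to fix a representative in each orbit. For $k=0$, all $b_i \equiv 1+\sigma_i$ and a direct substitution gives
\[
n_j \equiv \sigma_{j+3} + \sigma_{j+1}(1+\sigma_{j+2})(1+\sigma_{j+3}) \pmod 2;
\]
if some $\sigma_k = 0$, this forces $\sigma_{k+3}=0$ and $\sigma_{k+2}=1$, and iterating around the cycle produces a contradiction, so all $\sigma_i$ are odd, which is case (a). For $k=4$, $b_i\equiv\sigma_i$ and $n_j \equiv \sigma_{j+1}\sigma_{j+2}\sigma_{j+3}$, which again forces all $\sigma_i$ odd, giving (c). For $k=1$, after a cyclic shift one may assume $\epsilon_1=0$; a short computation of each of the four parities $n_j \bmod 2$ then pins down $\sigma_1=\sigma_4=1$, $\sigma_2=\sigma_3=0$, which (using the cyclic convention $a_0=a_4$) is precisely (b) with $i_0=1$, and the general $i_0$ follows by cyclic symmetry.

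The cases $k=2$ and $k=3$ must then be ruled out. For $k=2$ there are two orbits under the cyclic action, adjacent (e.g.\ $E=\{1,2\}$) and opposite ($E=\{1,3\}$); in each one, two of the four conditions $n_j \equiv 1$ already determine the $\sigma_i$'s, and a remaining equation then fails. The case $k=3$ is handled analogously. The converse direction is immediate: one verifies that each of (a), (b), (c) makes all four Boolean expressions for $n_j \bmod 2$ equal to $1$. The real obstacle is not conceptual but combinatorial bookkeeping: the indices in \eqref{n_i} run cyclically modulo $4$ and there are two parity parameters per index, so one must be careful when shifting $j$ and when translating the normalized conclusion for $k=1$ into the form (b) for an arbitrary $i_0$.
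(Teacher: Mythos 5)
Your proposal is correct and is essentially the paper's own proof: both arguments are a pure mod-$2$ analysis of the equalities \eqref{al_i} and \eqref{n_i} (via $b_i \equiv \al_i + a_i \pmod 2$), using the cyclic symmetry of the formulas for the $n_j$'s to normalize the position of an even $\al_i$ (I checked your key parity claims --- the forcing $\sigma_k=0 \Rightarrow \sigma_{k+3}=0,\ \sigma_{k+2}=1$ for $k=0$ even $\al_i$'s, the pinned-down solution $\sigma_1=\sigma_4=1$, $\sigma_2=\sigma_3=0$ when only $\al_1$ is even, and the inconsistency of both orbits with two even $\al_i$'s and of three even $\al_i$'s --- and they all hold). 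The only difference is bookkeeping: you enumerate by the number of even $\al_i$'s and refute the cases of two or three even $\al_i$'s explicitly, whereas the paper's deduction chain (branching on the parity of $a_2$ once some $\al_i$ is assumed even) lands directly in cases \emph{(a)}, \emph{(b)}, \emph{(c)} and excludes those intermediate cases implicitly.
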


\begin{proof}
Using the equalities (\ref{al_i}) and (\ref{n_i}) it is easy to see that the conditions {\it (a)}, {\it (b)} and {\it (c)} imply that all the generators are odd. 

Conversely, assume first that all the $\al_i$'s are odd and suppose by contradiction that $a_1$ is even. Since $n_2$ is odd, $a_3$ and $b_4$ are odd by (\ref{n_i}). Therefore, $a_4=\al_4-b_4$ 
and $b_3=\al_3-a_3$ are even. Then $n_1$ should be even by (\ref{n_i}). A contradiction!

Assume now that there is at least one $\al_i$ even. Without loss of generality, we can assume that $\al_1$ is even. Since $n_3$ and $n_4$ are odd, the equalities in (\ref{n_i}) imply that $a_4$, $b_1$, $b_2$, $a_1$ and $b_3$ are odd. 

Assume first that $a_2$ is even. Then, the first equality in (\ref{n_i}) implies that $\al_2$ and $\al_3$ are odd, so $a_3=\al_3-b_3$ is even. Moreover, since $n_2$ is odd, $\al_4$ is odd by (\ref{n_i}). Hence, we are in the case {\it (b)}.

Assume now that $a_2$ is odd. Then, $\al_2=a_2+b_2$ is even and it follows from the first equality in (\ref{n_i}) that $a_2$ and $b_4$ are odd. In particular, $\al_4=a_4+b_4$ is even and, again by (\ref{n_i}), $a_3$ is odd. Finally, we get that $\al_3=a_3+b_3$ is even and, then, we are in the case {\it (c)}.
\end{proof}

\begin{example} \rm
We note that all the cases of the previous theorem occur. All the following semigroups are symmetric but not complete intersections.\\
{\it (a)} Let $S= \langle 13,17,23,19 \rangle$. In this case 
\begin{align*}
&f_1=x_{1}^{5}-x_{3}^2x_{4}, &&f_2=x_{2}^{3}-x_{1}x_{4}^2, &&&f_3=x_{3}^{3}-x_{1}^{4}x_{2}, \\
&f_4=x_{4}^{3}-x_{2}^2x_{3}, &&f_5=x_{1}x_{3}-x_{2}x_{4},  
\end{align*}
in particular $\al_1=5$, $\al_2=\al_3=\al_4=3$ and $a_1=a_2=a_3=a_4=1$. \\[1mm]
{\it (b)} Let $S= \langle 13,17,33,25\rangle$. We have
\begin{align*}
&f_1=x_{1}^{7}-x_{3}^2x_{4}, &&f_2=x_{2}^{3}-x_{1}^{2}x_{4}, &&&f_3=x_{3}^{3}-x_{1}^{5}x_{2}^2, \\
&f_4=x_{4}^{2}-x_{2}x_{3}, &&f_5=x_{1}^2x_{3}-x_{2}^2x_{4},  
\end{align*}
therefore $\al_1=7$ and $\al_2=\al_3=3$ and $\al_4=2$. Moreover, $a_1=a_2=2$ and $a_3=a_4=1$. \\[1mm]
{\it (c)} Let $S= \langle 5,7,11,9 \rangle$. Then
\begin{align*}
&f_1=x_{1}^{4}-x_{3}x_{4}, &&f_2=x_{2}^{2}-x_{1}x_{3}, &&&f_3=x_{3}^{2}-x_{1}^{3}x_{2}, \\
&f_4=x_{4}^{2}-x_{2}x_{4}, &&f_5=x_{1}x_{3}-x_{2}x_{4}  
\end{align*}
and, thus, $\al_1=4$, $\al_2=\al_3=\al_4=2$ and $a_1=a_2=a_3=a_4=1$.
\end{example}

\begin{example} \rm
Unfortunately, in Theorem \ref{symmetric} it is not possible to characterize the parity of the generators by the parity of the $\al_i$'s, in fact we cannot eliminate the conditions on the $a_i$'s in {\it (a)}, {\it (b)} and {\it (c)}, as the following examples show. They are all symmetric, but not complete intersections.\\
{\it (a)} Consider the semigroup $S=\langle 90,91,97,93 \rangle$. Then
\begin{align*}
&f_1=x_{1}^{13}-x_{3}^{12}x_{4}^{2}, &&f_2=x_{2}^{3}-x_{1}^2x_{4}, &&&f_3=x_{3}^{13}-x_{1}^{13}x_{2}, \\
&f_4=x_{4}^{3}-x_{2}^2x_{3}, &&f_5=x_{1}^2x_{3}-x_{2}x_{4}^2,  
\end{align*}
and all the $\al_i$ are odd, but there is an even generator. In fact, $a_1$ and $a_4$ are even.  \\[1mm]
{\it (b)} Let $S=\langle 22,23,29,57\rangle$. We have
\begin{align*}
&f_1=x_{1}^{5}-x_{3}^3x_{4}, &&f_2=x_{2}^{2}-x_{1}x_{4}^4, &&&f_3=x_{3}^{5}-x_{1}^{4}x_{2}, \\
&f_4=x_{4}^{5}-x_{2}x_{3}^2, &&f_5=x_{1}x_{3}^{2}-x_{2}x_{4}.  
\end{align*}
In this case $\al_1=\al_3=\al_4=5$ and $\al_2$ is even. However a generator is even, since $a_4$ is odd. \\[1mm]
{\it (c)} Let $S=\langle 5,14,22,18 \rangle$. We have 
\begin{align*}
&f_1=x_{1}^{8}-x_{3}x_{4}, &&f_2=x_{2}^{2}-x_{1}^{2}x_{4}, &&&f_3=x_{3}^{2}-x_{1}^{6}x_{2}, \\
&f_4=x_{4}^{2}-x_{2}x_{3}, &&f_5=x_{1}^2x_{3}-x_{2}x_{4},  
\end{align*}
in particular all the $\al_i$'s are even, but three generators of $S$ are even. Note that $a_1$ is even.
\end{example}

\section{Pseudo-symmetric Semigroups} \label{section pseudo-symmetric}

Let $S=\langle n_1, n_2, n_3 \rangle$ be a non-symmetric numerical semigroup. In \cite{H} it is proved that the defining ideal of $S$ is generated by the maximal minors of the matrix 
\begin{equation}
\begin{pmatrix}
x_1^{\al}  &  x_2^{\beta}  &  x_3^{\gamma}  \\[2mm]
x_1^{\al'}  &  x_2^{\beta'}  &  x_3^{\gamma'}  \\
\end{pmatrix}
\end{equation}
for some positive integers $\al, \beta, \gamma, \al', \beta', \gamma'$. Moreover, by \cite[Corollary 3.3]{NNW}, $S$ is pseudo-symmetric if and only if $\al=\beta=\gamma=1$ or $\al'=\beta'=\gamma'=1$. 
Without loss of generality we assume that $\al'=\beta'=\gamma'=1$. 
In \cite[(2.1.1) pag. 69]{NNW} it is proved that 
$n_1=(\beta+1)\gamma +1$,
$n_2=(\gamma+1)\al +1$ and
$n_3=(\al+1)\beta +1$. Hence, it follows easily that $n_1$, $n_2$ and $n_3$ are odd if and only if either $\al$, $\beta$, $\gamma$ are odd or $\al$, $\beta$, $\gamma$ are even.

\vspace{2mm}

Now let $S=\langle n_1, n_2,n_3,n_4 \rangle$ be a pseudo-symmetric 4-generated numerical semigroup. By \cite[Theorem 4.3]{HW} $\F(S)/2$ has a unique \RF-matrix and for a suitable relabeling of the generators of $S$ we have
\begin{equation} \label{RF Pseudo symmetric}
\RF(\F(S)/2)=
\begin{pmatrix}
-1      & \al_2-1 &     0   &    0    \\
0       &    -1   & \al_3-1 &    0    \\
\al_1-1 &     0   &    -1   & \al_4-1 \\
\al_1-1 &     a   &     0   &   -1    \\
\end{pmatrix}
\end{equation}
for some non-negative integer $a$.

Given $f \in \PF(S)$ and $\RF(f) = (a_{ij})$, we say that the $i$-th row is even (resp. odd) if $\sum_{j=1}^4 a_{ij}$ is even (resp. odd).

\begin{proposition}
Assume that $S = \langle n_1,\ldots, n_4 \rangle$ is pseudo-symmetric and has embedding dimension $4$. Then, every $n_i$ is odd if and only if one of the following conditions hold:
\begin{enumerate}[label=\arabic*)]
\item $\F(S)/2$ is odd and every row of $\RF(\F(S)/2)$ is odd;
\item $\F(S)/2$ is even and every row of $\RF(\F(S)/2)$ is even.
\end{enumerate}
\end{proposition}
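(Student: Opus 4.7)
The starting point in both directions is the identity $f = \sum_{j=1}^{4} a_{ij} n_j$ coming from any row of $\RF(f)$, here with $f = \F(S)/2$. Reducing this modulo $2$ gives
\[
f \equiv \sum_{j=1}^{4} a_{ij} n_j \pmod{2},
\]
while the parity of the $i$-th row is by definition $\sum_j a_{ij} \pmod 2$.

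The plan is to treat the two directions separately. The forward direction is essentially immediate: if every $n_j$ is odd then $\sum_j a_{ij} n_j \equiv \sum_j a_{ij} \pmod 2$, so each row of $\RF(\F(S)/2)$ has the same parity as $f$; that is precisely condition $1)$ or $2)$.

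For the converse I would use the explicit shape (\ref{RF Pseudo symmetric}) of $\RF(\F(S)/2)$. Computing the four row sums gives $\al_2-2$, $\al_3-2$, $\al_1+\al_4-3$ and $\al_1+a-2$. I would split on the parity of $f$, which by the hypothesis forces the parities of the $\al_i$'s and of $a$: in case $1)$ ($f$ odd) one obtains that $\al_2,\al_3$ are odd, $\al_1\equiv \al_4\pmod 2$ and $\al_1+a$ is odd; in case $2)$ ($f$ even) one gets $\al_2,\al_3$ even, $\al_1\not\equiv \al_4\pmod 2$ and $\al_1+a$ even. Then I would feed these parities back into the four defining equations
\begin{align*}
f+n_1 &= (\al_2-1)n_2, & f+n_2 &= (\al_3-1)n_3,\\
f &= (\al_1-1)n_1 - n_3 + (\al_4-1)n_4, & f &= (\al_1-1)n_1 + a\,n_2 - n_4,
\end{align*}
reading each modulo $2$. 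In each subcase (two subcases in each of the two main cases, according to whether $\al_1$ is odd or even) the first two equations together with $\gcd(n_1,n_2,n_3,n_4)=1$ force $n_1,n_2,n_3,n_4$ to have a common parity, and then the last two equations rule out that this common parity is even, yielding that every $n_i$ is odd.

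The main obstacle is organizing this case analysis without losing track of which identities are being used; the parameter $a$ in the fourth row is the subtle piece, because its parity is determined only through the combination $\al_1+a$, so one has to be careful to match it with the correct parity of $\al_1$ in each subcase. Once the bookkeeping is set up the gcd condition cleanly closes the argument.
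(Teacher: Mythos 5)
Your plan is correct and takes essentially the same route as the paper's own proof: both directions amount to reducing the row identities of the unique \RF-matrix \eqref{RF Pseudo symmetric} modulo $2$ (so each row sum has the parity of $\F(S)/2$ when all $n_i$ are odd), and the converse splits on the parity of $\F(S)/2$, invoking $\gcd(n_1,\dots,n_4)=1$ exactly once, to exclude the all-even alternative in the even case. Two small tidy-ups: in case 1) the first two rows already force $n_1,n_2$ odd outright (no gcd and no mere ``common parity'' step is needed there), and since only the combinations $\al_1+\al_4$ and $\al_1+a$ enter modulo $2$, your proposed subcase split on the parity of $\al_1$ is unnecessary.
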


\begin{proof}
We can assume that the matrix (\ref{RF Pseudo symmetric}) is the $\RF$-matrix of $f:=\F(S)/2$.

Suppose first that every $n_i$ is odd and $f$ is odd. By the first row of (\ref{RF Pseudo symmetric}), $f=-n_1 + (\al_2-1)n_2$ and $(\al_2-1)$ has to be even, i.e. the first row is odd. The same argument works for the second row. The third row (and similarly the last one) $f=(\al_1-1)n_1-n_3+(\al_4-1)n_4$ yields immediately that $\al_1-1$ and $\al_4-1$ have the same parity and, thus, the row is odd. If $f$ is even we can use the same argument.

Assume now that Condition {\it 1)} holds. By the first two rows it follows that $\al_2-1$, $\al_3-1$ are even and, then, $n_1$ and $n_2$ are odd. Using the last row we have $\al_1-1+a$ even, thus $(\al_1-1)n_1+an_2$ is even and $n_4$ has to be odd. In the same way the third row implies that also $n_3$ is odd.

Finally, assume that Condition {\it 2} holds. By the first row we get that $\al_2-1$ is odd and then $n_1$ and $n_2$ have the same parity. By the second one follows that also $n_3$ has the same parity of $n_1$ and $n_2$. If they are even, the last row implies that 
$f=(\al_1-1)n_1+an_2-n_4$ and, since $f$ is even, also $n_4$ is even. This is a contradiction because $\gcd(n_1,n_2,n_3,n_4)=1$, therefore, $n_1$, $n_2$ and $n_3$ are odd. Moreover, in the last row we have $\al_1-1+a$ odd and, then, $n_4$ is odd.
\end{proof}

\begin{remark} \rm
Let $S=\langle n_1, \dots, n_4 \rangle$ be pseudo-symmetric and assume that $\F(S)/2$ and every $n_i$ are odd. The previous proposition implies that $\al_2$ and $\al_3$ are odd. Moreover, $\al_1$ and $\al_4$ have the same parity, but we cannot determine if they are even or odd. In fact, if $S=\langle 15,17,35,43 \rangle$ we have $\PF(S)=\{53,106\}$ and 
\begin{equation*}
\RF(53)=
\begin{pmatrix}
-1 &  4 &  0 &  0 \\
 0 & -1 &  2 &  0 \\
 3 &  0 & -1 &  1 \\
 3 &  3 &  0 & -1 \\
\end{pmatrix},
\end{equation*}
whereas if $T=\langle 57,61,123,163\rangle$, then  $\PF(T)=\{431,862\}$ and 
\begin{equation*}
\RF(431)=
\begin{pmatrix}
-1 &  8 &  0 &  0 \\
 0 & -1 &  4 &  0 \\
 4 &  0 & -1 &  2 \\
 4 &  6 &  0 & -1 \\
\end{pmatrix}.
\end{equation*}
\end{remark}

\vspace{10pt}

\section{Almost Symmetric Semigroups with Type Three}

Moscariello \cite{M} proved that an almost symmetric numerical semigroup with embedding dimension four has type at most three. Therefore, to complete the picture we need to study the almost symmetric semigroups with type three. We start with an easy lemma that is probably known, but we include it for the reader's convenience.

\begin{lemma}
Let $S=\langle n_1, \dots, n_r \rangle$ and assume that $\al_1=n_2$. Then $S=\langle n_1, n_2 \rangle$.
\end{lemma}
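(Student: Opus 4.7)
The plan is to prove the statement in two steps: first deduce that $\gcd(n_1, n_2) = 1$, and then show that every extra generator $n_i$ for $i \geq 3$ must already lie in $\langle n_1, n_2 \rangle$.

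For the first step I would observe that $n_1 n_2$ is $n_1$ copies of $n_2$, so $\al_1 \leq n_2$; more precisely, the smallest multiple of $n_1$ that is also a multiple of $n_2$ is $\operatorname{lcm}(n_1, n_2) = n_1 n_2 / d$ with $d = \gcd(n_1, n_2)$, so $\al_1 \leq n_2 / d$. The hypothesis $\al_1 = n_2$ then forces $d = 1$, so $\langle n_1, n_2 \rangle$ is a numerical semigroup and $\{0, n_2, 2 n_2, \ldots, (n_1 - 1) n_2\}$ is a complete set of residues modulo $n_1$.

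For the second step, suppose by contradiction that $n_3 \notin \langle n_1, n_2 \rangle$. Choose the unique $k \in \{0, 1, \ldots, n_1 - 1\}$ with $n_3 \equiv k n_2 \pmod{n_1}$ and write $n_3 = k n_2 - j n_1$; the assumption that $n_3$ is not representable in $\langle n_1, n_2 \rangle$ forces $j \geq 1$, and hence $k \geq 1$. Rearranging yields the central identity
\[
(n_2 - j)\, n_1 \;=\; (n_1 - k)\, n_2 + n_3.
\]
If $j < n_2$, the right-hand side is a non-trivial non-negative combination of $n_2$ and $n_3$ (both coefficients are positive), so $(n_2 - j) n_1 \in \langle n_2, n_3, \ldots, n_r \rangle$ with $0 < n_2 - j < n_2 = \al_1$, contradicting the minimality of $\al_1$. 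If instead $j \geq n_2$, then $k n_2 = n_3 + j n_1 > n_1 n_2$ forces $k > n_1$, contradicting $k \leq n_1 - 1$.

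Running the same argument for each $n_i$ with $i \geq 3$ gives $S = \langle n_1, n_2 \rangle$. The only real obstacle is finding the right algebraic repackaging in the second step: once the Bezout-style rewriting $n_3 = k n_2 - j n_1$ is converted into $(n_2 - j) n_1 = (n_1 - k) n_2 + n_3$, it produces exactly a multiple of $n_1$ smaller than $n_2 \cdot n_1$ sitting inside $\langle n_2, \ldots, n_r\rangle$, which is the unique way to contradict $\al_1 = n_2$; both cases of the dichotomy $j < n_2$ or $j \geq n_2$ then fall out immediately.
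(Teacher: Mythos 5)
Your proof is correct, and it reaches the paper's contradiction by a genuinely more elementary route. The paper sets $T=\langle n_1,n_2\rangle$, invokes the standard facts that $T$ is symmetric with $\F(T)=n_1n_2-n_1-n_2$, and from $n_3\notin T$ writes $\F(T)-n_3=an_1+bn_2$, which rearranges to $(n_2-a-1)n_1=(b+1)n_2+n_3$ and contradicts the minimality of $\al_1$. Your central identity $(n_2-j)n_1=(n_1-k)n_2+n_3$ is exactly the same relation (take $j=a+1$ and $k=n_1-b-1$); the difference is that you derive it by hand from the complete residue system $\{0,n_2,\dots,(n_1-1)n_2\}$ modulo $n_1$, in effect reproving the symmetry of $T$ in disguise rather than citing it. This buys two things: the argument is self-contained, and it makes explicit the preliminary step $\gcd(n_1,n_2)=1$ (via your lcm bound $\al_1\le n_2/d$), which the paper uses silently --- without it $T$ is not a numerical semigroup and the formula for $\F(T)$ would not even make sense. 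Your case split $j<n_2$ versus $j\ge n_2$ plays the role of the positivity $n_2-a-1>0$, which in the paper comes for free from $a\ge 0$ and the identity itself. The paper's proof is shorter because it leans on known facts about two-generated semigroups; yours trades brevity for transparency, and both are complete.
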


\begin{proof}
If $T=\langle n_1, n_2 \rangle$, then $T$ is symmetric and $\F(T)=n_1 n_2 - n_1 - n_2$. Suppose by contradiction that $n_3 \notin T$. Since $T$ is symmetric, $\F(T)-n_3 \in T$, i.e. $\F(T)-n_3=a n_1 + bn_2$ for some non-negative integers $a$ and $b$. Therefore, $(n_2-a-1)n_1=(b+1)n_2+n_3$ and, then, $\al_1 \leq n_2-a-1$ gives a contradiction.
\end{proof}

\begin{theorem} \label{type 3}
Let $S=\langle n_1, n_2, n_3, n_4\rangle$ be an almost symmetric numerical semigroup with type three and assume that all the generators are odd. Then, its pseudo-Frobenius numbers are $\PF(S)=\{f,2f,3f\}$ for some integer $f$ and, by a suitable change of order of $n_1, n_2, n_3, n_4$, there exists an \RF-matrix of $f$ and $2f$ of the following type:

\begin{equation*}
\begin{pmatrix}
   -1    & \al_2-1 &    0    &    0    \\
    0    &   -1    & \al_3-1 &    0    \\
    0    &    0    &   -1    & \al_4-1 \\
\al_1-1  &    0    &    0    &    -1   \\
\end{pmatrix}, \ \ \ \ 
\begin{pmatrix}
   -1    & \al_2-2 & \al_3-1 &    0    \\
    0    &   -1    & \al_3-2 & \al_4-1 \\
 \al_1-1 &    0    &   -1    & \al_4-2 \\
\al_1-2  & \al_2-1 &    0    &    -1   \\
\end{pmatrix}.
\end{equation*}
\end{theorem}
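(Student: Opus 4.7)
The plan is to combine Nari's characterization of almost symmetric semigroups with the structural results for RF-matrices in the type three, embedding dimension four setting from \cite{M,HW}, and a parity argument coming from the assumption that every $n_i$ is odd. First I apply Nari's criterion: since $t(S)=3$, one has $\PF(S)=\{f_1<f_2<f_3\}$ with $f_1+f_2=f_3=\F(S)$. Setting $f:=f_1$, it suffices to show $f_2=2f$, whence $f_3=3f$. A standard observation is that for every $s\in S\setminus\{0\}$, $2f+s=f+(f+s)\in S$, so $2f\in S\cup\PF(S)$; since $2f=f$ is impossible and $2f=f_3$ would force $f_2=f$, the remaining possibilities are $2f=f_2$ (the desired conclusion) or $2f\in S$, which must be excluded.

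Next I invoke the structural classification of \cite{M,HW}: for an almost symmetric semigroup with embedding dimension four and type three, after a suitable permutation of the generators the smallest pseudo-Frobenius number $f$ admits an RF-matrix of the cyclic shape displayed in the statement, namely with $-1$ in column $i$, a positive entry $k_i$ in column $i+1\pmod 4$ of row $i$, and zeros elsewhere. That each $k_i$ equals $\al_{i+1}-1$ follows from $f+n_i=k_in_{i+1}\in S$ together with the minimality of $\al_{i+1}$ and the fact that $f\notin S$: any alternative decomposition $\al_{i+1}n_{i+1}=\sum_{j\ne i+1}d_jn_j$ with $d_i\ge 1$ would produce a non-negative representation of $f$.

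The heart of the argument is to exclude $2f\in S$ using the oddness hypothesis. Summing consecutive rows $i$ and $i+1$ of $\RF(f)$ produces the four identities
\[
2f=-n_i+(\al_{i+1}-2)n_{i+1}+(\al_{i+2}-1)n_{i+2}\qquad(i=1,\dots,4,\ \text{indices mod }4).
\]
Because every $n_j$ is odd, each row sum of an RF-matrix of a pseudo-Frobenius number $g$ has the same parity as $g$; applied to $\RF(f)$ this forces $\al_1\equiv\al_2\equiv\al_3\equiv\al_4\equiv f\pmod 2$. A hypothetical representation $2f=\sum c_jn_j\in S$ with $c_j\ge 0$, compared with the four identities above, yields non-trivial $\mathbb{Z}$-relations among the $n_j$ whose coefficients are constrained both by the $\al$-minimalities and by the common parity of the $\al_i$; tracking these parities jointly produces a contradiction, so $2f\notin S$, hence $2f=f_2$ and $f_3=3f$. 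Once this is established, the same four identities become valid rows of an RF-matrix of $2f$ at positions $i=1,\dots,4$, yielding exactly the second matrix of the statement.

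The principal obstacle lies in the parity step of the last paragraph: the oddness hypothesis is essential, since the conclusion $\PF(S)=\{f,2f,3f\}$ fails for general almost symmetric semigroups of embedding dimension four and type three. The contradiction must therefore genuinely exploit the interplay between the oddness of the $n_j$, the common parity of the $\al_i$ forced by the cyclic shape of $\RF(f)$, and the $\alpha$-minimalities, rather than following from any purely structural argument available already in \cite{M,HW}.
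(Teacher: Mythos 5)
Your proposal has a genuine gap at its foundation: the ``structural classification'' you invoke from \cite{M,HW} does not say what you need it to say. For an almost symmetric numerical semigroup with embedding dimension four and type three, the classification actually used by the paper (Eto, \cite[Theorems 3.6 and 4.8]{E}) yields \emph{four} cases, called UF1, UF2, nUF1 and nUF2, each with its own pair of \RF-matrix shapes, and only in case nUF2 does the smallest pseudo-Frobenius number $f$ admit the cyclic matrix of the statement. There is no permutation of generators that brings the other three cases into cyclic form. The entire content of the paper's proof is precisely the elimination of UF1, UF2 and nUF1 using the oddness of the $n_i$: in UF1 and UF2 parity arguments on sums of rows force $f$ or $f'$ into $S$ (with UF1 also needing a gcd argument via the preliminary lemma that $\al_1=n_2$ implies $S=\langle n_1,n_2\rangle$), and in nUF1 parity forces both $f$ and $f'$ to be odd, contradicting $f+f'=\F(S)$ odd. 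By assuming the cyclic shape at the outset, you have skipped the whole proof; your closing paragraph correctly senses that oddness must enter essentially, but your architecture places that burden in the wrong spot.

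The second problem is that even granting the cyclic $\RF(f)$, your exclusion of $2f\in S$ is a placeholder, not an argument: ``tracking these parities jointly produces a contradiction'' is exactly the step that would need to be carried out, and it is not clear it can be, because within case nUF2 the conclusion $\PF(S)=\{f,2f,3f\}$ requires \emph{no} parity at all --- Eto's theorem supplies the \RF-matrix of $f'$ as well, and the sum of the first two rows of $\RF(f)$ equals the first row of $\RF(f')$, forcing $f'=2f$ and then $\F(S)=f+f'=3f$ by Nari's theorem \cite{N}. The paper's own example $S=\langle 4,7,10,13\rangle$, which lies in case nUF2 and has $\PF(S)=\{3,6,9\}$ despite an even generator, shows that oddness is irrelevant once the cyclic case is reached; it is needed only to rule the other cases out. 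A minor further issue: your identification of the positive entries $k_i$ with $\al_{i+1}-1$ is only half-justified --- subtracting consecutive rows gives $(k_i+1)n_{i+1}=n_i+k_{i+1}n_{i+2}$, hence $k_i\geq \al_{i+1}-1$, but your upper-bound argument breaks down when every representation of $\al_{i+1}n_{i+1}$ avoids $n_i$ (your own phrasing ``with $d_i\geq 1$'' quietly assumes such a representation exists). In the paper these values come packaged with Eto's classification, so no such argument is needed.
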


\begin{proof}
According to \cite[Theorems 3.6 and 4.8]{E}, we distinguish four cases that in \cite{E} are called UF1, UF2, nUF1 and nUF2. We will prove that only the last one is possible under our hypothesis. Let $f$ and $f'$ be the two pseudo-Frobenius numbers of $S$ different from its Frobenius number. \\[2mm]
{\bf Case UF1.} In this case, by a suitable change of order of $n_1$, $n_2$, $n_3$, $n_4$, there exist \RF-matrices of $f$ and $f'$ of the following type
\begin{equation*}
\begin{pmatrix}
   -1    & \al_2-1 &    0    &    0    \\
 \al_1-1 &   -1    &    0    &    0    \\
 \al_1-2 &    0    &   -1    &    1    \\
    0    & \al_2-2 &    1    &   -1    \\
\end{pmatrix}, \ \ \ \ 
\begin{pmatrix}
   -1    &    0    &    0    & \al_4-1 \\
    0    &   -1    &    1    & \al_4-2 \\
b_{41}-1 &  b_{32} &   -1    &    0    \\
  b_{41} &b_{32}-1 &    0    &   -1    \\
\end{pmatrix}
\end{equation*}
respectively and either $\al_2 = 2$ or $\al_4 = 2$.

If $\al_2 =2$, the first two lines of the first matrix give $n_2 = f + n_1$ and $f + n_2 = (\al_1 -1) n_1$. Hence, $2f = (\al_1-2) n_1$ and, since $n_1$ is odd, $\al_1$ has to be even; consequently $f=(\al_1/2 -1)n_1 \in S$ gives a contradiction. 

Assume now that $\al_4 = 2$. The second matrix implies that $n_3 = n_2 + f'$ and $f'+ n_3 = (b_{41}-1) n_1 + b_{32} n_2$. Then, 
\begin{equation} \label{UF1}
\begin{split}
2 n_3 &= n_3 + ( f' + n_3) - f' = ( n_2 + f' ) +  ((b_{41}-1) n_1 + b_{32} n_2) - f' \\
&= (b_{41} - 1) n_1 + (b_{32} + 1) n_2.
\end{split}
\end{equation}

Moreover, subtracting the first and the second rows of $\RF(f)$, we get $\al_1 n_1 = \al_2 n_2$. The previous lemma implies that $\al_1 < n_2$ and, then, $\gcd(n_1, n_2) = d > 1$. Since $d$ is odd, in light of the equality $(\ref{UF1})$ also $n_3$ is a multiple of $d$.   
Furthermore, $\al_4 = 2$ means that $2n_4 = \sum_{ j = 1}^3 \al_{4j} n_j$ and, thus, also $n_4$ is a multiple of $d$; a contradiction. \\[2mm]
{\bf Case UF2}. By a suitable change of order of $n_1$, $n_2$, $n_3$, $n_4$, there exist \RF-matrices of $f$ and $f'$ of the following type: 
\begin{equation*}
\begin{pmatrix}
   -1    & \al_2-1 &    0    &    0    \\
 a_{21}  &   -1    & \al_3-2 &    0    \\
a_{21}-1 &    0    &   -1    &    1    \\
    0    & \al_2-2 & \al_3-1 &   -1    \\
\end{pmatrix}, \ \ \ \ 
\begin{pmatrix}
      -1      &    0    &    0    & \al_4-1 \\
       0      &   -1    & \al_3-1 & \al_4-2 \\
a_{21}+b_{41} &    0    &   -1    &    0    \\
     b_{41}   & \al_2-1 &    0    &   -1     \\
\end{pmatrix}.
\end{equation*}
respectively and either $\al_2 = 2$ or $\al_4 = 2$.

Assume first that $\al_2=2$. By subtracting the first and the last row in the first matrix we get
\begin{equation*}
n_2+n_4=n_1+(\al_3-1)n_3.
\end{equation*}
This implies that $\al_3$ is even. Therefore, by adding the first two rows of the first matrix we have
\begin{equation*}
2f=(a_{21}-1)n_1 + (\al_3-2)n_3
\end{equation*}
and $a_{21}-1$ has to be even. It follows that $f$ is in the semigroup, that is a contradiction.

Assume now that $\al_4=2$. In this case $f'=n_4-n_1$ is even, then $f=(\al_2-1)n_2-n_1$ is odd and, thus, $\al_2-1$ is even. 
By adding the first and the last row of the second matrix we get
\begin{equation*}
2f'=(b_{41}-1)n_1+(\al_2-1)n_2.
\end{equation*}
Again $b_{41}-1$ has to be even and, then, $f'$ is in the semigroup.\\[2mm]
{\bf Case nUF1.} By a suitable change of order of $n_1$, $n_2$, $n_3$, $n_4$, there exist \RF-matrices of $f$ and $f'$ of the following type:
\begin{equation*}
\begin{pmatrix}
   -1    &    0    &    0    & \al_4-1 \\
    0    &   -1    &    1    & \al_4-2 \\
    0    & \al_2-1 &   -1    &     0   \\
    1    & \al_2-2 &    0    &    -1   \\
\end{pmatrix}, \ \ \ \ 
\begin{pmatrix}
   -1    &    1    & \al_3-2 &    0    \\
 \al_1-1 &   -1    &    0    &    0    \\
 \al_1-2 &    0    &   -1    &    1    \\
    0    &    0    & \al_3-1 &   -1    \\
\end{pmatrix}
\end{equation*}
respectively. Subtracting the first two rows of the first matrix we get $n_1+n_3=n_2+n_4$, whereas subtracting the second and the third row we get $\al_2 n_2= 2n_3+(\al_4-2)n_4$ and, then, $\al_2$ and $\al_4$ have the same parity.
In the same way, by subtracting the first two rows of the second matrix, we get that $\al_1$ and $\al_3$ have the same parity.
By adding the first and the third row of the first matrix and using $n_1+n_3=n_2+n_4$ we have
\begin{equation*}
2f=-n_1+(\al_2-1)n_2 -n_3+(\al_4-1)n_4=(\al_2-2)n_2+(\al_4-2)n_4.
\end{equation*}
Since $f$ is not in the semigroup, this implies that $\al_2$ and $\al_4$ are odd and, thus, $f$ is odd by the first row.

If we do the same in the second matrix (with the second and the last row) we conclude that also $f'$ is odd, that is a contradiction because $f+f'$ equals the Frobenius number that is odd. \\[2mm]
{\bf Case nUF2.} By a suitable change of order of $n_1$, $n_2$, $n_3$, $n_4$, there exist \RF-matrices of $f$ and $f'$ of the following type:
\begin{equation*}
\begin{pmatrix}
   -1    & \al_2-1 &    0    &    0    \\
    0    &   -1    & \al_3-1 &    0    \\
    0    &    0    &   -1    & \al_4-1 \\
\al_1-1  &    0    &    0    &    -1   \\
\end{pmatrix}, \ \ \ \ 
\begin{pmatrix}
   -1    & \al_2-2 & \al_3-1 &    0    \\
    0    &   -1    & \al_3-2 & \al_4-1 \\
 \al_1-1 &    0    &   -1    & \al_4-2 \\
 \al_1-2 & \al_2-1 &    0    &    -1   \\
\end{pmatrix}
\end{equation*}
respectively. Since the sum of the first two rows of the first matrix is equal to the first row of the second matrix, it follows that $f'=2f$. Hence, it is enough to recall that $\F(S)=f+f'=3f$ by Nari's Theorem \cite[Theorem 2.4]{N}.
\end{proof}

\begin{remark} \label{al_i odd} \rm
Let $S=\langle n_1, n_2, n_3, n_4 \rangle$ be almost symmetric with  type three and assume that all the generators are odd.
By Theorem \ref{type 3} the Frobenius number is equal to $3f$ and it is odd, so $f$ is odd. Moreover, by a suitable change of order of $n_1$, $n_2$, $n_3$, $n_4$, we have $f=(\al_2-1)n_2-n_1=(\al_3-1)n_3-n_2=(\al_4-1)n_4-n_3=(\al_1-1)n_1-n_4$. Therefore, $\al_1$, $\al_2$, $\al_3$ and $\al_4$ are odd.
\end{remark}

\begin{example} \rm
There are almost symmetric 4-generated semigroups with type three whose pseudo-Frobenius numbers have the structure of Theorem \ref{type 3}, even though some generators are even. For instance, if $S=\langle 4,7,10,13 \rangle$, then $\PF(S)=\{3,6,9\}$. Moreover, also this semigroup is in the case nUF2, since
\begin{equation*}
\RF(3)=
\begin{pmatrix}
   -1    &    1    &    0    &    0    \\
    0    &   -1    &    1    &    0    \\
    0    &    0    &   -1    &    1    \\
    4    &    0    &    0    &   -1    \\
\end{pmatrix} \ \ \ \ {\rm and} \ \ \ \ 
\RF(6)=
\begin{pmatrix}
   -1    &    0    &    1    &    0    \\
    0    &   -1    &    0    &    1    \\
    4    &    0    &   -1    &    0    \\
    3    &    1    &    0    &   -1    \\
\end{pmatrix}
\end{equation*}

Note that $\al_2=\al_3=\al_4=2$ is even in this example.
\end{example}

By Theorem \ref{type 3} in every row of $\RF(f)$ there is exactly one positive entry. Therefore, we immediately get the following corollary by \cite[Section 5.5]{E} or \cite[Lemma 5.4]{HW}.

\begin{corollary} \label{HW}
Let $S=\langle n_1, \dots, n_4 \rangle$ be almost symmetric with type three and assume that $n_i$ is odd for every $i=1, \dots, 4$. Then
\begin{align*}
&n_1=(\al_2-1)(\al_3-1)\al_4+\al_2,  &&n_2=(\al_3-1)(\al_4-1)\al_1+\al_3, \\
&n_3=(\al_4-1)(\al_1-1)\al_2+\al_4,  &&n_4=(\al_1-1)(\al_2-1)\al_3+\al_1,
\end{align*}
where $\al_1, \ldots , \al_4$ are odd and the defining ideal of $S$ is $I_S=(x_1^{\al_1}-x_2^{\al_2-1}x_4, x_2^{\al_2}-x_3^{\al_3-1}x_1, x_3^{\al_3}-x_4^{\al_4-1}x_2$, $x_4^{\al_4}-x_1^{\al_1-1}x_3, x_1^{\al_1-1}x_2-x_3^{\al_3-1}x_4, x_1 x_4^{\al_4-1}-x_2^{\al_2-1}x_3).$
Moreover, setting $A=k[x_1,x_2,x_3,x_4]$, the minimal free resolution of $A/I_S$ is
\begin{equation*}
0 \longrightarrow A^3 \xrightarrow{\varphi_3} A^8 \xrightarrow{\varphi_2} A^6 \xrightarrow{\varphi_1} A \longrightarrow 0
\end{equation*}
where $\varphi_1$ is the obvious one and
\begin{equation*}
\varphi_2=
\begingroup 
\setlength\arraycolsep{4pt}
\begin{pmatrix}
x_3^{\al_3-1}& x_2 & 0 & 0 & 0 & 0 & x_4^{\al_4-1} & x_3    \\
x_1^{\al_1-1}& x_4 & 0 & 0 &x_4^{\al_4-1} & x_3 & 0 & 0     \\
0 & 0 & x_1^{\al_1-1} & x_4 & x_2^{\al_2-1 }& x_1 & 0 & 0   \\
0 & 0 & x_3^{\al_3-1} & x_2 & 0 & 0 & x_2^{\al_2-1} & x_1   \\
-x_2^{\al_2-1} & -x_1 & x_4^{\al_4-1} & x_3 & 0 & 0 & 0 & 0 \\
0 & 0 & 0 & 0 & -x_3^{\al_3-1} & -x_2 & x_1^{\al_1-1} & x_4 \\
\end{pmatrix},
\endgroup
\end{equation*}

\begin{equation*}
^{t}\varphi_3=
\begingroup 
\setlength\arraycolsep{4pt}
\begin{pmatrix}
0 & x_3 & 0 & x_1 & 0 & -x_4 & 0 & -x_2 \\
x_4^{\al_4-1} & 0 & x_2^{\al_2-1} & 0 & -x_1^{\al_1-1} & 0 & -x_3^{\al_3-1} & 0 \\
-x_3 & -x_4^{\al_4-1} & -x_1 & -x_2^{\al_2-1} & x_4 & x_1^{\al_1-1} & x_2 & x_3^{\al_3-1} \\
\end{pmatrix}.
\endgroup
\end{equation*}
\end{corollary}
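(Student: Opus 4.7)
The plan is to treat this corollary as a direct application of Theorem \ref{type 3}, which provides exactly the structural input that \cite[Section 5.5]{E} and \cite[Lemma 5.4]{HW} require. After the relabeling supplied by Theorem \ref{type 3}, the rows of $\RF(f)$ encode the cyclic relations
\begin{equation*}
f + n_i = (\al_{i+1}-1)n_{i+1}, \qquad i=1,2,3,4,
\end{equation*}
with indices mod $4$; the oddness of each $\al_i$ is the content of Remark \ref{al_i odd}. Crucially, each row of $\RF(f)$ contains exactly one positive entry, which is the ``simple'' $\RF$-configuration handled in those references.

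First I would derive the closed forms for the $n_i$. Subtracting consecutive rows of $\RF(f)$ gives the four diagonal relations $\al_i n_i = n_{i-1} + (\al_{i+1}-1)n_{i+1}$, which lift to the binomials $x_1^{\al_1}-x_2^{\al_2-1}x_4$, $x_2^{\al_2}-x_3^{\al_3-1}x_1$, $x_3^{\al_3}-x_4^{\al_4-1}x_2$, $x_4^{\al_4}-x_1^{\al_1-1}x_3$ in $I_S$. Pairing a row of $\RF(f)$ with the corresponding row of $\RF(2f)$ from Theorem \ref{type 3} produces the two remaining binomials $x_1^{\al_1-1}x_2 - x_3^{\al_3-1}x_4$ and $x_1 x_4^{\al_4-1} - x_2^{\al_2-1}x_3$. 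On the numerical side, iterating the cyclic relation starting from any fixed $n_i$ and going once around the cycle gives an identity of the form $(D-1)\,n_i = E_i\,f$, where $D=\prod_j(\al_j-1)$ and $E_i$ is the explicit polynomial in $\al_1,\dots,\al_4$ appearing in the statement. Since $\gcd(n_1,\dots,n_4)=1$, the only possibility is $f = D-1$ and $n_i = E_i$, which yields the stated closed forms.

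The full description of $I_S$ and the explicit differentials $\varphi_2,\varphi_3$ in the minimal free resolution then follow by invoking \cite[Lemma 5.4]{HW}, whose hypotheses are verified above. The main obstacle is minimality: one must check that the six binomials listed generate $I_S$ minimally and that $\varphi_3$ is truly the final differential of a minimal resolution. Rather than recompute syzygies from scratch, I would appeal directly to the resolution construction of \cite{HW}, which is tailor-made for the ``simple $\RF$-matrix'' situation produced by Theorem \ref{type 3} and so applies essentially verbatim, giving the complex displayed in the statement.
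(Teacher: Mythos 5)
Your overall skeleton matches the paper's: the paper's entire proof of Corollary \ref{HW} consists of the observation that, by Theorem \ref{type 3}, every row of $\RF(f)$ has exactly one positive entry, after which everything --- the closed forms for the $n_i$, the six generators of $I_S$, and the resolution --- is quoted from \cite[Section 5.5]{E} or \cite[Lemma 5.4]{HW}. Your last paragraph does exactly this for $I_S$ and the resolution, and your extraction of the cyclic relations $f+n_i=(\al_{i+1}-1)n_{i+1}$ and of the six binomials in $I_S$ is correct. The genuine gap is in your own derivation of the closed forms: the inference ``$(D-1)\,n_i=E_i\,f$ together with $\gcd(n_1,\dots,n_4)=1$ forces $f=D-1$ and $n_i=E_i$'' is invalid. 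From those two facts alone one only gets $f\mid D-1$ and $n_i=E_i\,f/(D-1)$; to conclude $f=D-1$ you must rule out that $m:=(D-1)/f>1$ divides every $E_i$, and nothing in your argument does so, because at that point you use neither the minimality of the $\al_i$ nor almost symmetry nor type three.

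The obstruction is real, not just hypothetical. With your notation $D=\prod_j(\al_j-1)$, take $(\al_1,\al_2,\al_3,\al_4)=(3,3,3,9)$: then $(E_1,E_2,E_3,E_4)=(39,51,57,15)$ and $D-1=63$, so $m=3$ yields $(n_1,n_2,n_3,n_4)=(13,17,19,5)$ with $\gcd=1$ and $f=21$. All four cyclic relations hold ($21+13=2\cdot 17$, $21+17=2\cdot 19$, $21+19=8\cdot 5$, $21+5=2\cdot 13$), the corresponding matrix is a genuine $\RF$-matrix with exactly one positive entry per row, and $21$ really is a pseudo-Frobenius number of $S=\langle 5,13,17,19\rangle$, whose generators are all odd --- yet $f\neq D-1$ and $n_i\neq E_i$. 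What fails here is that $S$ is not almost symmetric ($\PF(S)=\{12,14,21\}$) and the matrix entries $\al_{i+1}-1$ are not the true $\al$-invariants of $S$ (e.g.\ $2\cdot 17=3\cdot 5+19$, so the true $\al$ of $17$ is $2$); this is precisely the extra structure that \cite[Lemma 5.4]{HW} exploits and that your arithmetic step ignores. The repair is either to feed the minimality of the $\al_i$ into the argument, or simply to do as the paper does and take the formulas for the $n_i$ from \cite[Section 5.5]{E} or \cite[Lemma 5.4]{HW} together with $I_S$ and the resolution.
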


\vspace{0.5em}

\begin{example} \rm
Putting $(\al_1,\al_2, \al_3,\al_4) = (5,3,3,3)$ in Corollary \ref{HW}, we get the semigroup $S=\langle 15,23,27,29 \rangle$. The set of its pseudo-Frobenius numbers is $\PF(S)=\{31,62,93\}$ and, then, $S$ is almost symmetric with type three. According to Theorem \ref{type 3} we have
\begin{equation*}
\RF(31)=
\begin{pmatrix}
   -1    &    2    &    0    &    0    \\
    0    &   -1    &    2    &    0    \\
    0    &    0    &   -1    &    2    \\
    4    &    0    &    0    &   -1    \\
\end{pmatrix} \ \ \ \ {\rm and} \ \ \ \ 
\RF(62)=
\begin{pmatrix}
   -1    &    1    &    2    &    0    \\
    0    &   -1    &    1    &    2    \\
    4    &    0    &   -1    &    1    \\
    3    &    2    &    0    &   -1    \\
\end{pmatrix}.
\end{equation*}
Obviously, this is the example with \lq\lq smallest" generators. 
\end{example}

\begin{theorem} \label{construction}
Assume that $\al_1$, $\al_2$, $\al_3$, $\al_4$ are odd integers greater than $1$ and let $n_1$, $n_2$, $n_3$, $n_4$ be as in Corollary \ref{HW}. If $\gcd(n_1, n_2, n_3, n_4)=1$, then $S=\langle n_1, n_2, n_3, n_4 \rangle$ is an almost symmetric semigroup generated by odd integers and has type three. Moreover, all the $4$-generated almost symmetric semigroups with type $3$ and odd generators arise in this way.
\end{theorem}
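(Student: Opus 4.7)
The second (converse) assertion is immediate. By Remark \ref{al_i odd}, the invariants $\al_i$ of any $4$-generated almost symmetric semigroup of type three with odd generators are odd; the minimality of each $n_i$ forces $\al_i > 1$; and Corollary \ref{HW} gives the displayed formulas for $n_1, \ldots, n_4$. So the work is in the forward direction.

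The $n_i$ are odd: each $\al_j-1$ is even, so the leading summand in each formula is even, and adding the odd $\al_k$ gives $n_i$ odd. Define
\[
f := (\al_2-1)n_2 - n_1 ,
\]
and check by direct cyclic substitution in the four formulas for the $n_i$ the four identities
\[
f = (\al_2-1)n_2 - n_1 = (\al_3-1)n_3 - n_2 = (\al_4-1)n_4 - n_3 = (\al_1-1)n_1 - n_4 .
\]
These are the rows of the left \RF-matrix in Theorem \ref{type 3}; summing consecutive pairs yields the rows of the right \RF-matrix (for $2f$), summing any three shows $3f+n_i \in S$ for every $i$, and summing all four shows $4f \in S$. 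Rewriting the four identities as $\al_i n_i = n_{i-1}+(\al_{i+1}-1)n_{i+1}$ gives $\al_i(S) \le \al_i$; together with $\al_i > 1$ and a short examination of the relation module exploiting the cyclic structure, this shows $n_1,\ldots,n_4$ is a minimal system of generators and $\al_i(S) = \al_i$.

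The heart of the proof is $f,2f,3f \notin S$. For $f$: if $f = \sum c_j n_j$ with $c_j \ge 0$, then the identity $f + n_1 = (\al_2-1)n_2$ forces $c_2 \le \al_2-2$, so $(\al_2 - 1 - c_2) n_2 \in \langle n_1,n_3,n_4 \rangle$ with $1 \le \al_2 -1 -c_2 < \al_2$, contradicting $\al_2(S) = \al_2$; analogous arguments rule out $2f, 3f \in S$ via the remaining rows of the two \RF-matrices. Consequently $\{f,2f,3f\} \subseteq \PF(S)$, and the Frobenius number equals $3f$ because $3f+n_i \in S$, $4f \in S$, and $\gcd(n_i) = 1$ together cover every integer above $3f$. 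A direct Ap\'ery-set argument then rules out any fourth pseudo-Frobenius number, so $t(S)=3$ and $f + 2f = 3f = \F(S)$ verifies Nari's criterion; hence $S$ is almost symmetric of type three. The two main obstacles are (i) simultaneously establishing minimality of the generators and $\al_i(S) = \al_i$, and (ii) excluding any further pseudo-Frobenius number beyond $\{f, 2f, 3f\}$.
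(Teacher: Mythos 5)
Your converse direction and the elementary verifications (oddness of the $n_i$, the four cyclic identities giving the rows of the two \RF-matrices, $4f\in S$) are fine, but there is a genuine gap precisely at the two points you yourself flag as ``the two main obstacles''. First, ``a short examination of the relation module exploiting the cyclic structure'' is not an argument: the identities you wrote down only give $\al_i(S)\le\al_i$, and ruling out a smaller representation $\beta n_i\in\langle n_j : j\neq i\rangle$ with $1\le\beta<\al_i$ amounts to determining \emph{all} relations among the $n_i$, i.e.\ to showing that the six binomials of Corollary \ref{HW} generate the defining ideal. Since your proof that $f\notin S$ (and likewise $2f,3f\notin S$) rests entirely on the unproven equality $\al_i(S)=\al_i$, the chain is circular as written. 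Second, ``a direct Ap\'ery-set argument'' excluding a fourth pseudo-Frobenius number presupposes knowledge of the maximal elements of the Ap\'ery set, which is the same unaddressed computation. Moreover, your claim that $3f+n_i\in S$, $4f\in S$ and $\gcd(n_1,\dots,n_4)=1$ ``cover every integer above $3f$'' is false as stated: those facts give $3f\in\PF(S)$, hence $\F(S)\ge 3f$, but bounding $\F(S)$ from above again requires control of all gaps, i.e.\ of the Ap\'ery set.

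For comparison, the paper closes both gaps simultaneously with a single commutative-algebra device: it first checks that the ideal $J$ generated by the six binomials is contained in $I_S$, then computes $\dim_k A/(J+(x_1))=n_1=\dim_k A/(I_S+(x_1))$ (the latter from $\gcd(n_1,\dots,n_4)=1$, using the explicit monomial description (\ref{ideal}) of $J+(x_1)$), which forces $I_S=J$. With $I_S$ known, the socle of the Artinian reduction $A/(I_S+(x_1))$ is generated by exactly the three monomials $y_1=x_2^{\al_2-1}$, $y_2=x_2^{\al_2-2}x_3^{\al_3-1}$, $y_3=x_2^{\al_2-2}x_3^{\al_3-2}x_4^{\al_4-1}$; this yields $t(S)=3$ and $\PF(S)=\{f_1,f_2,f_3\}$ with $f_i=\deg y_i-n_1$, and the relation $x_2^{\al_2}-x_3^{\al_3-1}x_1\in I_S$ gives $\F(S)=f_3=f_1+f_2$, so almost symmetry follows from Nari's criterion. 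If you wish to keep your purely semigroup-theoretic route, you must replace your two placeholders by an explicit determination of the Ap\'ery set of $n_1$ (equivalently, a standard-monomial count of the kind the paper performs); that computation is the actual content of the theorem, and nothing shorter is available.
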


\begin{proof}
Bearing in mind Corollary \ref{HW}, it is easy to see that the ideal $I_S$ contains
\begin{align*}
J=(&x_1^{\al_1}-x_2^{\al_2-1}x_4, x_2^{\al_2}-x_3^{\al_3-1}x_1, x_3^{\al_3}-x_4^{\al_4-1}x_2, x_4^{\al_4}-x_1^{\al_1-1}x_3, \\ &x_1^{\al_1-1}x_2-x_3^{\al_3-1}x_4, x_1 x_4^{\al_4-1}-x_2^{\al_2-1}x_3).
\end{align*}
Let $A=k[x_1,x_2,x_3,x_4]$. Since $\gcd(n_1, n_2, n_3, n_4)=1$, the $k$-vector space $A/(I_S+(x_1))$ has dimension $\dim_k A/(I_S+(x_1))=\dim_k k[S]/(t^{n_1})=n_1$. Moreover,
\begin{equation} \label{ideal}
J+(x_1)=(x_2^{\al_2-1}x_4, x_2^{\al_2}, x_3^{\al_3}-x_4^{\al_4-1}x_2, x_4^{\al_4}, x_3^{\al_3-1}x_4, x_2^{\al_2-1}x_3)
\end{equation}
and it is not difficult to see that $\dim_k A/(J+(x_1))=n_1$. It follows that $A/(I_S+(x_1))=A/(J+(x_1))$ and this implies $I_S=J$, see the last part of the proof of \cite[Theorem 4.4]{HW}. 

We note that the socle of $A/(I_S+(x_1))$, defined as
\begin{equation*}
{\rm Soc} (A/(I_S+(x_1)))=\{y \in A/(I_S+(x_1)) \mid yx_2=yx_3=yx_4=0\},
\end{equation*}
is generated by $y_1=x_2^{\al_2-1}$, $y_2=x_2^{\al_2-2}x_3^{\al_3-1}$, $y_3=x_2^{\al_2-2}x_3^{\al_3-2}x_4^{\al_4-1}=x_2^{\al_2-3}x_3^{2\al_3-2}$. Therefore, the type of the ring $A/(I_S+(x_1))$ and, then, of $S$ is three. Moreover, the pseudo-Frobenius numbers of $S$ are $f_i=\deg y_i-n_1$ for $i=1,2,3$ and
\begin{gather*}
\F(S)=f_3=(\al_2-3)n_2+(2\al_3-2)n_3-n_1= \\
=(\al_2-1)n_2-n_1+(\al_2-2)n_2+(\al_3-1)n_3-n_1=f_1+f_2,
\end{gather*}
since $x_2^{\al_2}-x_3^{\al_3-1}x_1 \in I_S$.
This implies that $S$ is almost symmetric with type three and, of course, it has embedding dimension four. 
The last statement of the theorem follows from Corollary \ref{HW}.
\end{proof}

\begin{example} \rm
Let $n$ be a positive integer and set $\al_2=\al_3=\al_4=3$, $\al_1=3+2^n$. By Theorem \ref{construction} the semigroup
\begin{equation*}
S_n=\langle 15, \, 15+2^{n+2}, \, 15+2^{n+2}+2^{n+1}, \, 15+2^{n+2}+2^{n+1}+2^n \rangle. 
\end{equation*}
is an almost symmetric semigroup with type three generated by four odd minimal generators. Moreover, using Theorem \ref{type 3} it is easy to see that
$\PF(S_n)=\{15+2^{n+3}, \, 2(15+2^{n+3}), \, 3(15+2^{n+3})\}$.
\end{example}

\begin{remark} \rm The table below shows the number of almost symmetric semigroups that are minimally generated by $3$ or $4$ odd generators less than $100$, $150$ and $200$ respectively. These numbers are obtained using the GAP system \cite{GAP} and, in particular, the NumericalSgps package \cite{DGM}. 
In the table $e$ denotes the embedding dimension of $S$, $t$ denotes its type and c.i. stands for complete intersection.

\vspace{0.5em}

\begin{center}
\begin{tabular}{|c|c|c|c|} 
\hline
\ \ AS semigroups with odd gen. \ \ & \ \ Gen. $\leq 100$ \ \ & \ \ Gen. $\leq 150$ \ \ & \ \ Gen. $\leq 200$ \ \ \\ \hline
$e=3$ \ and \ $t=1$ & 2302 & 7978  & 18751 \\ \hline
$e=3$ \ and \ $t=2$ & 139  & 290   & 503   \\ \hline
$e=4$, \ $t=1$ not c.i. & 1927 & 7129 & 17524  \\ \hline
$e=4$ and c.i. & 596 & 4583 & 16895  \\ \hline
$e=4$ \ and \ $t=2$ & 595  & 1647  & 3481  \\ \hline
$e=4$ \ and \ $t=3$ & 9    & 24    & 45    \\ \hline
\end{tabular}
\end{center}

\vspace{0.5em}

It is not known if there is a bound for the type of an almost symmetric numerical semigroup with more than $4$ generators. However, some computations suggest that in the case of $5$ generators the type is at most $5$. In the following table we show the number of almost symmetric semigroups generated by 5 odd integers.

\vspace{0.5em}

\begin{center}
\begin{tabular}{|c|c|c|c|} 
\hline
\ \ AS semigroups with odd gen. \ \ & \ \ Gen. $\leq 100$ \ \ & \ \ Gen. $\leq 150$ \ \ & \ \ Gen. $\leq 200$ \ \ \\ \hline
$e=5$, \ $t=1$ not c.i. & 3451 & 19060 & 60711  \\ \hline
$e=5$ and c.i. & 0 & 135 & 1199  \\ \hline
$e=5$ \ and \ $t=2$ & 1254 & 4592  & 11489  \\ \hline
$e=5$ \ and \ $t=3$ & 988  & 3582  &  8306  \\ \hline
$e=5$ \ and \ $t=4$ & 359  & 970   &  1881  \\ \hline
$e=5$ \ and \ $t=5$ & 2    & 4     &     6  \\ \hline
\end{tabular}
\end{center}
\end{remark}

\vspace{0.5em}

\begin{example} \rm
The type of an almost symmetric semigroup may be greater than its embedding dimension, even though all the generators are odd. For instance, the 7-generated semigroup $S=\langle 29,33,61, 65, 73, 81, 85 \rangle$  is almost symmetric and its type is $12$, in fact
$\PF(S)=\{ 69, 77, 89, 93, 97, 101, 105, 109, 113, 125, 133, 202 \}$.
\end{example}

{\bf Acknowledgments.} This work began when the second author was visiting the University of Catania and he would like to express his hearty thanks for the hospitality of Marco D'Anna.
The first author was supported by INdAM, more precisely he was ``titolare di una borsa per l'estero dell'Istituto Nazionale di Alta Matematica''.

%
%


\begin{thebibliography}{6}
%

\bibitem{BDF} Barucci, V., Dobbs, D.E., Fontana, M.: Maximality properties in numerical semigroups and applications to one-dimensional analytically irreducible local domain. Mem. Amer. Math. Soc., no. 598 (1997).

\bibitem{BF} Barucci, V., Fr\"oberg, R.: One-dimensional almost Gorenstein rings. J. Algebra {\bf 188}, 418--442 (1997).

\bibitem{BFS} Barucci, V., Fr\"oberg, R., \c{S}ahin, M.:  On free resolutions of some semigroup rings. J. Pure Appl. Algebra {\bf 218}, no. 6, 1107--1116 (2014).

\bibitem{B} Bresinsky, H.: Symmetric semigroups of integers generated by 4 elements. Manuscr. Math. {\bf 17}, 205--219 (1975). 

\bibitem{CGKM} Chau, T.D.M., Goto, S.,  Kumashiro, S., Matsuoka, N.:  Sally modules of canonical ideals in dimension one and 2-AGL rings. J. Algebra {\bf 521}, 299--330 (2019).

\bibitem{DS} D'Anna, M., Strazzanti, F.: Almost canonical ideals and GAS semigroups. In preparation.

\bibitem{DGM} Delgado, M., Garc\'ia-S\'anchez, P.A., Morais, J.: ``NumericalSgps'' -- a GAP package. Version 1.0.1, \url{http://www.gap-system.org/Packages/numericalsgps.html}.	

\bibitem{E} Eto, K.: Almost Gorenstein monomial curves in affine four space. J. Algebra {\bf 488}, 362--387 (2017).

\bibitem{GAP} The GAP Group: GAP -- Groups, Algorithms, and Programming. Version 4.8.4 (2016), \url{http://www.gap-system.org}.

\bibitem{GKMT} Goto, S., Kien, D.V., Matsuoka, N.,Truong, H.L.: Pseudo-Frobenius numbers versus defining ideals in numerical semigroup rings. J. Algebra {\bf 508}, 1--15 (2018).

\bibitem{GK} Goto, S., Kumashiro, S.: On generalized Gorenstein rings. In preparation.

\bibitem{GMP} Goto S., Matsuoka N., Phuong T.T.: Almost Gorenstein rings. J. Algebra {\bf 379}, 355--381 (2013).

\bibitem{GTT} Goto, S., Takahashi R., Taniguchi N.: Almost Gorenstein rings - towards a theory of higher dimension. J. Pure Appl. Algebra {\bf 219}, 2666--2712 (2015).

\bibitem{H} Herzog, J.: Generators and relations of abelian semigroups and semigroup rings. Manuscr. Math. {\bf 3}, 175--193 (1970).

\bibitem{HHS} Herzog J., Hibi T., Stamate D.I.: The trace of the canonical module. arXiv: 1612.02723.

\bibitem{HW} Herzog, J., Watanabe, K.-i.: Almost symmetric numerical semigroups. arXiv: 1807.00134v1.

\bibitem{Ko} Komeda, J.: On the existence of Weierstrass points with a certain semigroup generated by 4 elements. Tsukuba J. Math. {\bf 6}, no. 2, 237 - 270 (1982).

\bibitem{K} Kunz, E.: The value-semigroup of a one-dimensional Gorenstein ring.  Proc. Amer. Math. Soc. {\bf 25}, 748--751 (1970).

\bibitem{M} Moscariello, A.: On the type of an almost Gorenstein monomial curve, J. Algebra {\bf 456}, 266--277 (2016).

\bibitem{N} Nari, H.: Symmetries on almost symmetric numerical semigroups. Semigroup Forum {\bf 86}, 140--154 (2013).

\bibitem{NNW} Nari, H., Numata, T., Watanabe, K.-i.: Genus of numerical semigroups generated by three elements. J. Algebra {\bf 358}, 67--73 (2012).

\bibitem{OST} Oneto, A., Strazzanti, F., Tamone G.: One-dimensional Gorenstein local rings with decreasing Hilbert function. J. Algebra {\bf 489}, 91--114 (2017). 	


\end{thebibliography}
\end{document}